\documentclass[review]{elsarticle}

\usepackage{lineno,hyperref}
\modulolinenumbers[5]
\usepackage{graphicx}
\usepackage{mathptmx} 
\usepackage{latexsym}
\usepackage{amsfonts,epsfig}
\usepackage{hyperref}
\usepackage{graphics}
\usepackage{epsfig}
\usepackage{amssymb,amsmath,mathrsfs}
\usepackage{amsthm}
\usepackage{amsfonts}
\usepackage{color}
\usepackage{mathtools}
\usepackage{algorithmic}
\usepackage{algorithm}

\newcommand{\Span}{\mathop{\rm span}\nolimits}

\modulolinenumbers[5]
\newtheorem{theorem}{Theorem}[section]
\newtheorem{lemma}[theorem]{Lemma}

\newtheorem{example}[theorem]{Example}
\newtheorem{corollary}[theorem]{Corollary}

\newtheorem{remark}[theorem]{Remark}

\numberwithin{equation}{section}
\journal{Journal of \LaTeX\ Templates}

\bibliographystyle{elsarticle-num}

\begin{document}

\begin{frontmatter}

\title{A generalized projection iterative methods for solving non-singular linear systems}

\author{Ashif Mustafa} 
\author{Manideepa Saha \fnref{myfootnote}\corref{mycorrespondingauthor}}
\address{Department of Mathematics, National Institute of Technology Meghalaya, Shillong 793003, India}
\cortext[mycorrespondingauthor]{Corresponding author}
\fntext[myfootnote]{The work was supported by Department of Science and Technology-Science and Engineering Research Board (grant no. ECR/2017/002116)}
\ead{manideepa.saha@nitm.ac.in}

\begin{abstract}
	In this paper, we propose and analyze iterative method based on projection techniques to solve a non-singular linear system $Ax=b$. In particular, for a given positive integer $m$, $m$-dimensional successive projection method (mD-SPM) for symmetric definite matrix $A$, is generalized for non-singular matrix $A$. Moreover, it is proved that  $m$D-SPM gives better result for large values of $m$. Numerical experiments are carried out to demonstrate the superiority of the proposed method in comparison with other schemes in the scientific literature.
	
\end{abstract}

\begin{keyword}
Symmetric linear systems \sep Iterative techniques \sep Petrov-Galerkin condition \sep Orthogonal projection method \sep Oblique projection method
\end{keyword}

\end{frontmatter}


\section{Introduction}
\label{intro}
Consider the linear system of equations 
\begin{equation}\label{lin_sys}
Ax=b,
\end{equation}
 where $A\in\mathbb{R}^{n,n}$, $b\in\mathbb{R}^{n}$ and $x\in\mathbb{R}^n$ is an unknown vector.
In ~\cite{Uje06}, Ujevi\'c introduced a new iterative method for solving (\ref{lin_sys}). The method was considered as a generalization of Gauss-Seidel methods. In ~\cite{JinH08}, Jing and Huang interpreted Ujevi\'c's method as one dimensional double successive projection method (1D-DSPM), whereas Gauss-Seidel method as one dimensional single successive projection method (1D-SSPM). They established a different approach and called it as two dimensional double successive projection method (2D-DSPM). In \cite{Sal09}, Salkuyeh improved this method and gave a generalization of it, thereby calling it  $m$D-SPM. In an independent work ~\cite{HouW09}, Hou and Wang  explained 2D-DSPM as two-dimensional orthogonal projection method (2D-OPM), and generalized it to three dimensional orthogonal projection method (3D-OPM). All these works address systems in which the matrix $A$ in \ref{lin_sys} is symmetric positve definite(SPD). In this paper, we generalize the $m$D-SPM and use it on any non-singular system. The proposed method is called as $m$D-OPM, where OPM refers to `orthogonal' as well as `oblique' projection method.

Given an initial approximation $x_{0}$, a typical projection method  for solving \eqref{lin_sys}, on the subspace $\mathcal{K}$ (known as the search subspace) and orthogonal to the subspace $\mathcal{L}$ (known as the constraint subspace), is to find an approximate solution $x$ of (\ref{lin_sys}) by imposing the Petrov-Galerkin condition \cite{Saad} that
\begin{equation}\label{PGcond}
x\in x_{0} +\mathcal{K}\text{   and   } b-Ax\perp \mathcal{L}
\end{equation}

In case of orthogonal project method, search space and the constraint spaces are same, whereas in oblique projection method, they are different. 
The elementary Gauss-Seidel method can be considered as an one dimensional OPM with $\mathcal{K}=\mathcal{L}=\Span\{e_{i}\}$, where $e_{i}$ is the ith column of the identity matrix. In~\cite{HouW09}, authors proposed three-dimensional OPM ($3$D-OPM) and showed both theoretically and numerically that $3$D-OPM gives better (or atleast the same) reduction of error than 2D-OPM in~\cite{JinH08}. In \cite{Sal09}, author proposed a generalization of $2$D-OPM as well as gave a way to chose the subspace $\mathcal{K}$.  We put forward the $m$-dimensional OPM ($m$D-OPM) by considering $m$-dimensional subspaces $\mathcal{K},\mathcal{L}$, where, for oblique projection we take $\mathcal{L}=A\mathcal{K}$.   At each iterative step,  $\mathcal{K},\mathcal{L}$ are taken as $m$-dimensional subspaces and each iteration is cycled for $i=1,2,\ldots,n$, until it converges.

The paper is organized as follows: In Section 2, a theoretical proof of the advantage of chosing a larger value of $m$ in $m$D-OPM is provided and also convergence of $m$D-OPM for an SPD system is shown, which supplements the work in \cite{Sal09}. Section 3 shows the application of $m$D-OPM to more general non-singular systems. Lastly, in section 4, numerical examples are considered for illustration. 

\section{mD-OPM for symmetric matrices}\label{sec2}
Throughout this section, the matrix $A$ under cosideration is assumed to be SPD. From the numerical experiments provided in \cite{Sal09}, it is observed that $mD$-OPM provides better (or at least same) result with larger value of $m$.  In this section we prove it theoretically. 

In $mD$-OPM, $\mathcal{K}$(= $\mathcal{L}$) is considered as an $m$-dimensional space. If $\mathcal{K}=\mathcal{L}=\Span\left\{v_{1},v_2,\ldots,v_{m}\right\}$, and $V_{m}=[v_{1},v_2,\ldots,v_{m}]$, a basic projection step  for an $mD$-OPM is defined in~\cite{Sal09,Saad} as :

\vspace{-5mm}
\begin{eqnarray}
&\text{Find }  x^{(i+1)}\in x^{(i)}+\mathcal{K} \text{ such that }  b-Ax^{(i+1)}\perp \mathcal{K} \label{pro_step}\\
\text{Equivalently, }&x^{(i+1)}=x^{(i)}+V_m(V_m^TAV_m)^{-1}V_m^Tr^{(i)}\text { where } r^{(i)}=b-Ax^{(i)}\label{perp}
\end{eqnarray}

If $x^*$ is the exact solution of (\ref{lin_sys}),  the quantity $\|x^{(i+1)}-x_{*}\|_{A}^{2}-\|x^{(i)}-x_{*}\|_{A}^{2}$ is defined as the error reduction at the $i$th iteration step of $m$D-OPM~\eqref{pro_step} and is denoted by $E.R_{mD}$ as considered in \cite{HouW09}. In Theorem 1 of ~\cite{Sal09}, author proved that $E.R_{mD}\leq 0$. In particular, $E.R_{mD}< 0$, if $r^{(i)}$ is not perpendicular to $\mathcal{K}(=\mathcal{L})$. To prove the main theorem of this section, we need the following Lemma.
\begin{lemma}\rm\label{f-func-eq} If $x^{(i+1)}$s are defined as in (\ref{perp}), then $E.R_{mD}=\left\langle z^{(i)},p^{(i)}\right\rangle$, where $p^{(i)}=-V_{m}^Tr^{(i)}$ and $z^{(i)}=-(V_{m}^TAV_{m})^{-1}p^{(i)}$.
\end{lemma}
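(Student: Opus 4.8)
The plan is to expand the error-reduction quantity directly by writing the iterates in terms of the error vector and reducing everything to the two objects $M := V_m^T A V_m$ and $w := V_m^T r^{(i)}$. Set $e^{(i)} := x^{(i)} - x_*$. Since $Ax_* = b$, the residual satisfies $r^{(i)} = b - Ax^{(i)} = A(x_* - x^{(i)}) = -Ae^{(i)}$; this identity, together with the symmetry of $A$, is the one fact that drives the whole computation. Writing the update (\ref{perp}) as $e^{(i+1)} = e^{(i)} + d$ with $d := V_m M^{-1} w$, I first note that $M$ is invertible because $A$ is SPD and $V_m$ has full column rank, so both $d$ and $z^{(i)} = M^{-1} w$ are well defined.

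Next I would expand
\begin{equation*}
E.R_{mD} = \|e^{(i+1)}\|_A^2 - \|e^{(i)}\|_A^2 = 2\,(e^{(i)})^T A d + d^T A d,
\end{equation*}
using only the definition $\|v\|_A^2 = v^T A v$ and the symmetry of $A$. The two terms are then evaluated separately. For the cross term, substituting $Ae^{(i)} = -r^{(i)}$ gives $(e^{(i)})^T A d = -(r^{(i)})^T d = -w^T M^{-1} w$, where the last equality uses $V_m^T r^{(i)} = w$. For the quadratic term the middle factor collapses: $d^T A d = w^T M^{-1}(V_m^T A V_m) M^{-1} w = w^T M^{-1} w$, since $V_m^T A V_m = M$ cancels one copy of $M^{-1}$.

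Combining the two evaluations yields $E.R_{mD} = -2 w^T M^{-1} w + w^T M^{-1} w = -w^T M^{-1} w$. Finally I would match this against the claimed inner product: with $p^{(i)} = -w$ and $z^{(i)} = -M^{-1} p^{(i)} = M^{-1} w$, symmetry of $M$ gives $\langle z^{(i)}, p^{(i)}\rangle = (M^{-1} w)^T (-w) = -w^T M^{-1} w$, which is exactly the expression obtained for $E.R_{mD}$. The computation is essentially routine bookkeeping; the only points requiring care are the sign convention relating $r^{(i)}$ and $e^{(i)}$ through $A$, and the telescoping of $M^{-1}(V_m^T A V_m)M^{-1}$ in the quadratic term, so I do not anticipate a genuine obstacle beyond keeping these straight.
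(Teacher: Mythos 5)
Your proof is correct and is in substance the same as the paper's: the paper's one-line proof simply cites the identity $E.R_{mD}=-(V_{m}^Tr^{(i)})^T(V_{m}^TAV_{m})^{-1}V_{m}^Tr^{(i)}$ from the proof of Theorem 1 of \cite{Sal09}, and your expansion of $\|e^{(i+1)}\|_A^2-\|e^{(i)}\|_A^2$ using $r^{(i)}=-Ae^{(i)}$ and the cancellation $M^{-1}(V_m^TAV_m)M^{-1}=M^{-1}$ is exactly the standard computation behind that cited fact. The concluding rewrite in terms of $p^{(i)}=-w$ and $z^{(i)}=M^{-1}w$ is the same sign bookkeeping in both cases, so your argument is simply the self-contained version of what the paper delegates to the reference.
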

\begin{proof} Proof follows from the fact $E.R_{mD}=-(V_{m}^Tr^{(i)})^T(V_{m}^TAV_{m})^{-1}V_{m}^Tr^{(i)}$ in the proof of Theorem 1 of ~\cite{Sal09}.
\end{proof}
For any positive integer $k$, write $B_k=V_{k}^TAV_{k}$. Let $l\in\{1,2,...,m-1\}$. If we choose  $\dim\mathcal{K}=l$  in \eqref{pro_step},  then by Lemma~\ref{f-func-eq}
\begin{equation}\label{erld} 
E.R_{lD}=\langle y^{(i)},\tilde{p}^{(i)}\rangle
\end{equation}
 
where $\tilde{p}^{(i)}=-V_{l}^T\tilde{r}^{(i)}$ and $y^{(i)}=-(B_l)^{-1}\tilde{p}^{(i)}$. Note that
$B_l=B_{m}[\alpha]$, $y^{(i)}=z^{(i)}[\alpha]$, $\tilde{p}^{(i)}=p^{(i)}[\alpha]$ and $\alpha = \left\lbrace 1,2,...,l\right\rbrace $.
\begin{theorem}\rm\label{error_red}
	$E.R_{mD}\leq E.R_{lD}$, when $m>l$.
\end{theorem}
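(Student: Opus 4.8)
The plan is to reduce the theorem to a single inequality between quadratic forms attached to a symmetric positive definite (SPD) matrix and one of its leading principal submatrices. First I would use Lemma~\ref{f-func-eq} together with \eqref{erld} to rewrite both error reductions explicitly. Since $z^{(i)}=-B_m^{-1}p^{(i)}$ and $y^{(i)}=-B_l^{-1}\tilde p^{(i)}$, and $B_m,B_l$ are symmetric, we get
\[
E.R_{mD}=\langle z^{(i)},p^{(i)}\rangle=-(p^{(i)})^{T}B_m^{-1}p^{(i)},\qquad
E.R_{lD}=\langle y^{(i)},\tilde p^{(i)}\rangle=-(\tilde p^{(i)})^{T}B_l^{-1}\tilde p^{(i)}.
\]
Because $\tilde p^{(i)}=p^{(i)}[\alpha]$ and $B_l=B_m[\alpha]$ with $\alpha=\{1,\dots,l\}$, the assertion $E.R_{mD}\le E.R_{lD}$ is equivalent to
\[
(p^{(i)})^{T}B_m^{-1}p^{(i)}\;\ge\;(\tilde p^{(i)})^{T}B_l^{-1}\tilde p^{(i)}.
\]
Note that this handles an arbitrary gap $m>l$ at once, with no induction on $m-l$ needed.

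Next I would record the structural facts that make the inverses meaningful: $B_m=V_m^{T}AV_m$ is SPD since $A$ is SPD and $V_m$ has full column rank, and therefore its leading principal submatrix $B_l=V_l^{T}AV_l$ is SPD as well. Writing $p=p^{(i)}$ and partitioning $p=(\tilde p^{T},q^{T})^{T}$ according to $\alpha$ and its complement, the whole theorem collapses to the following linear-algebra fact: for an SPD matrix $B$ with leading principal submatrix $B_l$, one has $p^{T}B^{-1}p\ge\tilde p^{T}B_l^{-1}\tilde p$. This step is the crux of the argument; everything preceding it is bookkeeping.

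My preferred route to this fact is the variational identity $p^{T}B^{-1}p=\max_{u}\,(2u^{T}p-u^{T}Bu)$, valid for SPD $B$ with maximizer $u=B^{-1}p$. Restricting the maximization to vectors supported on $\alpha$, i.e.\ $u=(w^{T},0)^{T}$ with $w\in\mathbb{R}^{l}$, turns the objective into $2w^{T}\tilde p-w^{T}B_l w$, whose maximum over $w$ is exactly $\tilde p^{T}B_l^{-1}\tilde p$. Since the maximum over a subset of $\mathbb{R}^{m}$ cannot exceed the maximum over all of $\mathbb{R}^{m}$, the desired inequality follows at once, completing the proof. The only genuine obstacle is pinning down this quadratic-form inequality; once the variational identity is in hand it is immediate.

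If an explicit computation is preferred over the variational argument, the same conclusion follows from the block-inverse formula. Partitioning $B$ into blocks with $B_l$ in the top-left corner, off-diagonal block $C$, bottom-right block $D$, and Schur complement $S=D-C^{T}B_l^{-1}C$ (which is SPD), a direct multiplication with $g=C^{T}B_l^{-1}\tilde p$ yields the identity $p^{T}B^{-1}p=\tilde p^{T}B_l^{-1}\tilde p+(g-q)^{T}S^{-1}(g-q)$. Positive definiteness of $S^{-1}$ then makes the extra term nonnegative, giving the inequality together with an explicit formula for the gap $E.R_{lD}-E.R_{mD}$.
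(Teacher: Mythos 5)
Your proposal is correct, and it reaches the paper's key inequality by a different route. Both arguments reduce the theorem, via Lemma~\ref{f-func-eq}, to the same linear-algebra fact: writing $B=B_m$, $p=p^{(i)}$, $\tilde p=p[\alpha]$, one must show $p^{T}B^{-1}p\ge \tilde p^{T}B_l^{-1}\tilde p$ (equivalently, in the paper's notation, $z^{T}Bz\ge \tilde z^{T}B\tilde z$). The paper proves this by a direct expansion: it forms the zero-padded vector $\tilde z$, invokes $(z-\tilde z)^{T}B(z-\tilde z)\ge 0$, and verifies by hand, using $Bz=-p$ and $B_ly=-\tilde p$, that the cross terms $\tilde z^{T}Bz+z^{T}B\tilde z-2\tilde z^{T}B\tilde z$ vanish. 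Your variational argument --- $p^{T}B^{-1}p=\max_{u}\left(2u^{T}p-u^{T}Bu\right)$, then restrict the maximization to vectors supported on $\alpha$ --- is the conceptual repackaging of that same completion-of-squares computation: the paper's cancellation step is exactly the statement that $z$ is the unconstrained maximizer and $\tilde z$ the constrained one. What your version buys is modularity and transparency: the inequality becomes a standalone fact about SPD matrices and their principal submatrices, the monotonicity in $m$ is immediate from nesting of the constraint sets, and no sign-juggling with $p$, $\tilde p$, $y$, $z$ is needed. Your Schur-complement variant goes further than the paper in one respect: the identity $p^{T}B^{-1}p=\tilde p^{T}B_l^{-1}\tilde p+(g-q)^{T}S^{-1}(g-q)$ gives an exact formula for the gap $E.R_{lD}-E.R_{mD}$, i.e.\ a quantitative measure of how much is gained by enlarging the search space, which the paper's inequality-only argument does not provide. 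The paper's proof, in turn, is shorter and uses nothing beyond positive definiteness.
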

\begin{proof} For simplicity, we write $y^{(i)}:=y, ~z^{(i)}:=z, p^{(i)}:=p, ~B^{(i)}:=B$.   For $\alpha \subsetneq \left\lbrace 1,2,...,m\right\rbrace,$ define $\tilde{z}\in\mathbb{R}^{n}$ as $\tilde{z}(\alpha)=y$ and $0$ elsewhere.
By Lemma~\ref{f-func-eq}, it is sufficient to show that $z^{T}p\leq y^{T}\tilde{p}$, or equivalently, $z^{T}Bz\geq \tilde{z}^{T}B\tilde{z}$.  
	Since $B$ is a positive definite matrix,  $(z-\tilde{z})^{T}B(z-\tilde{z})\geq 0$ which implies that
	\begin{align}\label{laststep}
		z^{T}Bz-\tilde{z}^{T}B\tilde{z}\geq \tilde{z}^{T}Bz+z^{T}B\tilde{z}-2\tilde{z}^{T}B\tilde{z}.
	\end{align}
	However, $\tilde{z}^{T}Bz+z^{T}B\tilde{z}-2\tilde{z}^{T}B\tilde{z}
	=-\tilde{z}^{T}p-p^{T}\tilde{z}-2y^{T}\tilde{B}{y}= -2\tilde{p}^{T}y+2y^{T}\tilde{p}=0$.
	
Thus from \eqref{laststep}, we get 
	$z^{T}Bz\geq \tilde{z}^{T}B\tilde{z}.$
\end{proof}

\begin{corollary}\rm $mD$-OPM defined in \eqref{perp} converges.
\end{corollary}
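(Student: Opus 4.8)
The plan is to upgrade the monotonicity already in hand into genuine convergence of the iterates to the exact solution $x_*$. Set $f(x)=\|x-x_*\|_A^2$, which is a legitimate objective since $A$ is SPD. By Theorem~1 of \cite{Sal09} (recalled above) together with Theorem~\ref{error_red}, every projection step satisfies $f(x^{(i+1)})-f(x^{(i)})=E.R_{mD}\le 0$. Hence $\{f(x^{(i)})\}$ is non-increasing and bounded below by $0$, so it converges to some limit $L\ge 0$; in particular $E.R_{mD}\to 0$ along the iteration.

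First I would translate $E.R_{mD}\to 0$ into a statement about residuals. By Lemma~\ref{f-func-eq}, $E.R_{mD}=-(V_m^Tr^{(i)})^T B_m^{-1}(V_m^Tr^{(i)})$ with $B_m=V_m^TAV_m$ symmetric positive definite; since $B_m^{-1}$ is then also positive definite, $E.R_{mD}\to 0$ forces $V_m^Tr^{(i)}\to 0$, i.e. the residual becomes asymptotically orthogonal to the search subspace used at each step.

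Next I would pass to the limit by compactness. Because $A$ is SPD, the sublevel set $\{x:f(x)\le f(x^{(0)})\}$ is compact and contains every iterate, so $\{x^{(i)}\}$ is bounded and admits a convergent subsequence with limit $\bar{x}$ satisfying $f(\bar{x})=L$. The subtlety, and the step I expect to be the main obstacle, is that $V_m$ changes with $i$ as the method cycles through $i=1,\dots,n$, so per-step orthogonality $V_m^Tr^{(i)}\to 0$ does not by itself annihilate the full residual. The way around this is to use that over one complete cycle the search subspaces collectively span $\mathbb{R}^n$; combining this spanning property with the continuity of the cycle map and the fact that each $E.R\to 0$ (so $f$ is unchanged in the limit across every step of a cycle), one concludes that the limiting residual $b-A\bar{x}$ is orthogonal to all of $\mathbb{R}^n$, hence $b-A\bar{x}=0$ and $\bar{x}=x_*$. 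Therefore $L=f(x_*)=0$, and since $f(x^{(i)})\to L=0$ monotonically, $\|x^{(i)}-x_*\|_A\to 0$, which is the desired convergence.

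As a consistency check and alternative route, taking $l=1$ in Theorem~\ref{error_red} shows that $mD$-OPM reduces the $A$-norm error at least as much, step for step, as $1D$-OPM, which is precisely Gauss--Seidel with $\mathcal{K}=\Span\{e_i\}$; the classical convergence of Gauss--Seidel for SPD systems makes the spanning hypothesis above concrete and lends confidence that the cycled search spaces indeed cover $\mathbb{R}^n$.
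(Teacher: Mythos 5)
Your proposal is correct in outline, but it takes a genuinely different route from the paper, whose own proof is a two-line appeal to the literature: it cites \cite{JinH08,HouW09} for $E.R_{mD}\le 0$ (i.e.\ convergence of the $2$D and $3$D cases) and then invokes Theorem~\ref{error_red} to transfer this to general $m$. Note that Theorem~\ref{error_red} compares the error reductions of an $m$D step and an $l$D step launched \emph{from the same iterate}; it does not by itself chain along two different iterate sequences, so the paper's argument (and equally your closing ``consistency check'' against Gauss--Seidel, which is essentially the same reasoning) is really a heuristic rather than a complete proof. Your main argument avoids this defect and is the stronger contribution: monotonicity plus boundedness of $f(x)=\|x-x_*\|_A^2$ gives $E.R_{mD}\to 0$; Lemma~\ref{f-func-eq} converts this to $V_m^Tr^{(i)}\to 0$ (here you should add that the cycled subspaces form a \emph{finite} family, so the matrices $(V_m^TAV_m)^{-1}$ are uniformly positive definite and the implication is uniform over the cycle); and the compactness/limit-point argument over one full cycle works because a vanishing error reduction forces a vanishing displacement $V_m(V_m^TAV_m)^{-1}V_m^Tr^{(i)}$, so the orthogonality conditions accumulate at a single limit point $\bar{x}$, whose residual is then orthogonal to the union of the cycle's search spaces. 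This does require the hypothesis that those subspaces collectively span $\mathbb{R}^n$ --- true for the coordinate choice $v_j=e_{i_j}$ cycled over $i=1,\dots,n$ as in Algorithm~1, but never stated explicitly in the paper, so you are right to flag it as the load-bearing assumption. In short: the paper's proof is brief and outsources rigor to \cite{JinH08,HouW09,Sal09}, while yours is self-contained, actually establishes convergence of the iterates to $x_*$, and exposes precisely the spanning condition under which the claim is true.
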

\begin{proof}
In \cite{JinH08,HouW09},  it is  shown that  $E.R_{mD}\leq 0$, for $m=2,3$, which assures the convergence of $m$D-OPM, for any $m$. Hence the conclusion follows from Theorem~\ref{error_red}.
\end{proof}

\section{m-dimensional oblique projection method for non-singular matrices}\label{sec3}

In this section we present new $m$-dimensional oblique projection method (mD-OPM) to solve nonsingular system \eqref{lin_sys}. Assume that $\dim \mathcal{K}=\dim \mathcal{L}=m$, with $m\ll n$. Take $V_{m}=[v_{1},v_2,\ldots,v_{m}]$, and $W_{m}=[w_{1},w_2,\ldots,w_{m}]$ so that columns of $V$ and $W$ form bases for $\mathcal{K}$ and $\mathcal{L}$, respectively. If $\mathcal{L}=A\mathcal{K}$, then the oblique projection iterative steps, discussed in \eqref{PGcond}, are given as follows \citep{Saad}:
\begin{equation}\label{pro_step_mat}
	x^{(i+1)}=x^{(i)}+V_{m}(W_{m}^{T}AV_{m})^{-1}W_{m}^{T}r^{(i)}.
\end{equation}
where $r^{(i)}=b-Ax^{(i)}$ is the residual in the $i$th iteration step.

Choose $\mathcal{L}=A\mathcal{K}$. Then $x^{(i+1)}$ as defined in \eqref{PGcond} minimizes the $2$-norm of the residual $r^{(i+1)}$ over $x \in x^{(i)}+\mathcal{K}$ (see, Ch.5~in~\cite{Saad}). Throughout this section, $\|.\|$ represents $2$-norm in the Euclidean space $\mathbb{R}^n$ and we drop the suffix $m$ which signifies the dimension of $V_{m}$ and $W_{m}$.

As $\mathcal{L}=A\mathcal{K}$, we may take $W=AV$. Then \eqref{pro_step_mat} reduces to 
\begin{equation}\label{pro_step_mat1}
x^{(i+1)}=x^{(i)}+VW^{\dagger}r^{(i)},
\end{equation} 
where $W^{\dagger}$ denotes the pseudo-inverse of $W$ so that $r^{(i+1)}=b-Ax^{(i+1)}=r^{(i)}-WW^{\dagger}r^{(i)}.$ Main goal of this section is to prove the convergence of \eqref{pro_step_mat1}. Following lemma will help to reach our goal.

\begin{lemma}\rm\label{lem_conv_obl} If $\sigma_{1}$ is the maximum singular valur of $A$, and $y=W^{T}r^{(i)}$, then
\[\|r^{(i)}\|^{2}-\|r^{(i+1)}\|^{2}\geq \frac{1}{\sigma_1^2}\|y\|^2 .\]
\end{lemma}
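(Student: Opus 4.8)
The plan is to first recognize that $P := WW^{\dagger}$ is the orthogonal projector onto the range of $W$. By the defining properties of the Moore--Penrose pseudo-inverse, $WW^{\dagger}$ is symmetric and idempotent, so $P^2 = P = P^{T}$. Using $r^{(i+1)} = r^{(i)} - WW^{\dagger}r^{(i)} = (I-P)r^{(i)}$ together with the fact that $I-P$ is also a symmetric idempotent, a one-line expansion gives
\[
\|r^{(i)}\|^{2}-\|r^{(i+1)}\|^{2}
=(r^{(i)})^{T}r^{(i)}-(r^{(i)})^{T}(I-P)r^{(i)}
=(r^{(i)})^{T}Pr^{(i)}=\|Pr^{(i)}\|^{2}.
\]
Writing $s:=Pr^{(i)}$, the left-hand side of the claimed inequality is exactly $\|s\|^{2}$, so the whole lemma reduces to a lower bound on $\|s\|$.

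Next I would relate $s$ to $y=W^{T}r^{(i)}$. Since the columns of $W$ lie in the range of $P$, we have $PW=W$ and hence $W^{T}P=W^{T}$. Therefore
\[
y=W^{T}r^{(i)}=W^{T}Pr^{(i)}=W^{T}s.
\]
Applying submultiplicativity of the spectral norm yields $\|y\|=\|W^{T}s\|\le\|W^{T}\|\,\|s\|=\sigma_{\max}(W)\,\|s\|$, which rearranges to $\|s\|^{2}\ge\|y\|^{2}/\sigma_{\max}(W)^{2}$.

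Finally I would bound $\sigma_{\max}(W)$ by $\sigma_{1}$. Since $W=AV$, submultiplicativity again gives $\sigma_{\max}(W)=\|AV\|\le\|A\|\,\|V\|=\sigma_{1}\|V\|$, and because the columns of $V$ are taken orthonormal (so $\|V\|=1$) we obtain $\sigma_{\max}(W)\le\sigma_{1}$. Combining this with the previous display gives $\|s\|^{2}\ge\|y\|^{2}/\sigma_{1}^{2}$, which is precisely the assertion.

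The step I expect to be the main obstacle is the last one: the clean appearance of $\sigma_{1}$ in the bound rests on the hidden hypothesis that $\|V\|\le 1$, i.e.\ that the basis $V$ has orthonormal columns. Without this normalization the estimate degrades by a factor $\|V\|^{2}$, so I would state the orthonormality of $V$ explicitly (it is natural, since one typically orthonormalizes the basis of $\mathcal{K}$). A secondary technical point is to ensure $W=AV$ has full column rank, so that $P=WW^{\dagger}$ indeed acts as the orthogonal projector onto $\operatorname{range}(W)$; this holds because $A$ is non-singular and $V$ has independent columns.
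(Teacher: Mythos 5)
Your proof is correct, and its overall skeleton matches the paper's: the same residual identity extracted from the Penrose conditions, a lower bound of the resulting decrease by $\|y\|^{2}/\sigma_{\max}(W)^{2}$, and the same submultiplicativity bound $\sigma_{\max}(W)=\|AV\|\le\|A\|\,\|V\|\le\sigma_{1}$ at the end. The middle step, however, is handled by a genuinely different device. The paper substitutes $W^{\dagger}=(W^{T}W)^{-1}W^{T}$, so the decrease becomes $y^{T}(W^{T}W)^{-1}y$, and then invokes the Courant--Fischer (Rayleigh quotient) bound $y^{T}(W^{T}W)^{-1}y\ge\lambda_{\min}\bigl((W^{T}W)^{-1}\bigr)\|y\|^{2}=\|y\|^{2}/\sigma_{\max}(W)^{2}$; this tacitly requires $W=AV$ to have full column rank so that $W^{T}W$ is invertible. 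You instead identify $P=WW^{\dagger}$ as the orthogonal projector onto the range of $W$, write the decrease as $\|Pr^{(i)}\|^{2}$, and obtain the same bound from $y=W^{T}Pr^{(i)}$ together with $\|W^{T}\|=\sigma_{\max}(W)$. What your route buys is robustness: it uses no matrix inverse and no eigenvalue argument, and it remains valid even when $W$ is rank-deficient, since the Penrose conditions alone make $P$ a symmetric idempotent with $PW=W$. Your closing caveat is also a genuine catch rather than a defect of your own argument: the paper's chain $\|AV\|^{2}\le\|A\|^{2}\|V\|^{2}=\sigma_{1}^{2}$ silently assumes $\|V\|=1$, which is not stated as a hypothesis of the lemma; it holds in the paper's application (Theorem~\ref{conv_obl}) because there the columns of $V$ are distinct columns of the identity, hence orthonormal, but as a standalone statement the lemma needs this normalization exactly as you say.
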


\begin{proof} As $(WW^{\dagger})^{T}=WW^{\dagger}$ and $WW^{\dagger}W=W$,
we have,
\begin{equation}\label{er}
\|r^{(i+1)}\|^{2}=(r^{(i)^{T}}-r^{(i)^{T}}(WW^{\dagger })^{T})(r^{(i)}-WW^{\dagger }r^{(i)})=\|r^{(i)}\|^{2}-r^{(i)^{T}}WW^{\dagger }r^{(i)}
\end{equation}
Using Courant-Fisher min-max principle\cite{Saad}, from \eqref{er} we achieve,
\begin{eqnarray}\label{Wsing}
\|r^{(i)}\|^{2}-\|r^{(i+1)}\|^{2}&=y^{T}(W^{T}W)^{-1}y&=\dfrac{\langle (W^{T}W)^{-1}y,y\rangle }{\|y\|^{2}}\|y\|^{2}\nonumber\\
&&\geq \lambda _{\min}(W^{T}W)^{-1}\|y\|^{2}\nonumber\\
&&=\dfrac{1}{\lambda _{\max}(W^{T}W)^{-1}}\|y\|^{2}\nonumber\\
&&=\dfrac{1}{(\sigma _{\max}(W))^{2}}\|y\|^{2},
\end{eqnarray}
where $\lambda _{\min}$, $\lambda _{\max}$ denote the  maximum and minimum eigenvalues, and $\sigma _{\max}$, $\sigma_{\max}$  denotes the maximum and minimum singular values of the corresponding matrix, respectively.

Let $W=\tilde{U}\sum \tilde{V}^{T}$ be the singular value decomposition of $W$. If $\tilde{U}=[\tilde{u_{1}},\tilde{u_{2}}, \dots, \tilde{u_{n}}]$, and $\tilde{V}=[\tilde{v_{1}},\tilde{v_{2}}, \dots, \tilde{v_{m}}]$, then $W\tilde{v_{1}}=\sigma _{1}(W)\tilde{u_{1}}$ so that
$$(\sigma _{1}(W))^{2}=\|\sigma _{1}(W)\tilde{u_{1}}\|^{2}=\|W\tilde{v_{1}}\|^{2}=\|AV\tilde{v_{1}}\|^{2}\leq \|AV\|^{2}\leq \|A\|^{2}\|V\|^{2} =\sigma _{1}^{2}.$$
Hence the result follows from \eqref{Wsing}.
\end{proof}

In Theorem 3 of~\cite{Sal09}, author  provided the convergence of the method \eqref{perp} for SPD matrices, and also gives an idea to choose the optimal vectors $v_{i}$. Similar ideas is used to prove the convergence of \eqref{pro_step_mat1}. Next theorem is due to \cite{HorJ08} (see Ch 3, Cor 3.1.1), which gives the relation between singular values of a matrix and its submatrices.
\begin{theorem}\rm\cite{HorJ08}\label{sing_sub}~If $A$ is an $m\times n$ matrix and $A_{l}$ denotes a submatrix of $A$ obtained by deleting a total of $l$ rows and/or columns from $A$, then
\[\sigma_{k}(A)\geq \sigma_k (A_l)\geq \sigma_{k+l}(A),~~~~k=1:\min\{m,n\}\]
where the singular values $\sigma_{i}$'s are arranged decreasingly.
\end{theorem}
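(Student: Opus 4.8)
The plan is to deduce this singular-value interlacing from the classical Cauchy interlacing theorem for eigenvalues of symmetric matrices, exploiting the identity $\sigma_k(A)^2=\lambda_k(A^TA)=\lambda_k(AA^T)$, where the eigenvalues of the positive semidefinite Gram matrices are arranged decreasingly and share the same nonzero part of their spectrum. First I would dispose of the single-deletion case $l=1$. If $A'$ is obtained by deleting one column of $A$, say the $j$th, then $A'^TA'$ is precisely the principal submatrix of $A^TA$ obtained by striking out row $j$ and column $j$; Cauchy interlacing for the symmetric matrix $A^TA$ then gives $\lambda_k(A^TA)\ge\lambda_k(A'^TA')\ge\lambda_{k+1}(A^TA)$, and taking nonnegative square roots yields $\sigma_k(A)\ge\sigma_k(A')\ge\sigma_{k+1}(A)$. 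The case of deleting a single row is identical after replacing $A^TA$ by $AA^T$, whose nonzero eigenvalues coincide with those of $A^TA$, since $A'A'^T$ is then a principal submatrix of $AA^T$.

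Next I would express the general deletion of a total of $l$ rows and/or columns as a chain $A=A^{(0)},A^{(1)},\dots,A^{(l)}=A_l$, in which each $A^{(j)}$ arises from $A^{(j-1)}$ by deleting a single row or a single column. Applying the single-deletion estimate at every stage gives $\sigma_k(A^{(j-1)})\ge\sigma_k(A^{(j)})\ge\sigma_{k+1}(A^{(j-1)})$. Chaining the left-hand inequalities down the whole sequence immediately delivers the upper bound $\sigma_k(A)\ge\sigma_k(A_l)$. For the lower bound the index shifts by one at each step, so I would run a short induction on $j$ to prove $\sigma_k(A^{(j)})\ge\sigma_{k+j}(A)$: the inductive step reads $\sigma_k(A^{(j)})\ge\sigma_{k+1}(A^{(j-1)})\ge\sigma_{(k+1)+(j-1)}(A)=\sigma_{k+j}(A)$, where the second inequality is the hypothesis applied with $k+1$ in place of $k$. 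Specializing to $j=l$ gives exactly $\sigma_k(A_l)\ge\sigma_{k+l}(A)$, with the standing convention that singular values beyond the matrix size are set to zero so that every index appearing remains meaningful.

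The computational content here is entirely routine; the two points demanding care are the Gram-matrix identification and the bookkeeping. The hard part will be recognizing that deleting a column (respectively a row) of $A$ corresponds precisely to deleting a matching diagonal entry of $A^TA$ (respectively $AA^T$)—that is, the same-indexed row and column—so that the principal-submatrix hypothesis of Cauchy interlacing is genuinely satisfied, and separately making sure the unit index shift is propagated correctly through the induction rather than lost. Since the statement is quoted from \cite{HorJ08}, one may simply cite it, but the argument above is the natural self-contained route.
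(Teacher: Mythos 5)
Your proof is correct, but there is nothing in the paper to compare it against: the paper offers no argument for this statement at all, quoting it directly from \cite{HorJ08} (Ch.~3, Cor.~3.1.1). Your self-contained derivation is the standard route and essentially the one found in that reference: reduce singular-value interlacing to Cauchy eigenvalue interlacing by passing to the Gram matrices, using $A^TA$ for a column deletion and $AA^T$ for a row deletion (whose nonzero spectra coincide, so $\sigma_k(A)^2=\lambda_k(A^TA)=\lambda_k(AA^T)$ for $k\le\min\{m,n\}$), then chain single deletions, with a short induction to propagate the unit index shift into $\sigma_k(A_l)\ge\sigma_{k+l}(A)$. The two points you flag as delicate are indeed the only ones that need care, and you handle both: deleting column $j$ of $A$ removes exactly row $j$ and column $j$ of $A^TA$, so the principal-submatrix hypothesis of Cauchy interlacing is genuinely met; and the zero-padding convention keeps all indices meaningful, since whenever an index overruns the smaller matrix it also overruns the larger one, so the inequality degenerates harmlessly to $0\ge 0$ rather than failing. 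What the paper's citation buys is brevity; what your argument buys is self-containedness, at the modest cost of invoking the classical Cauchy interlacing theorem as the external ingredient instead.
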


We now prove our main theorem, in which the singular values of matrix under consideration, are assumed to be arranged in decreasing order.

\begin{theorem}\rm\label{conv_obl} Let $\sigma_1\geq\sigma_2\geq \ldots\geq \sigma_n$ be the singular values of $A$. If $i_1<i_2<\ldots<i_m$, and $v_{j}=e_{i_{j}}$, $i_{j}$th column of the identity matrix, then
\begin{equation}\label{eqn_obl-pro}
\|r^{(i+1)}\|^{2}\leq \left( 1-\dfrac{\sigma_{n}^{2}}{\sigma _{1}^{2}}\right) \|r^{(i)}\|^{2}.
\end{equation}
\end{theorem}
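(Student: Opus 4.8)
The plan is to combine Lemma~\ref{lem_conv_obl} with Theorem~\ref{sing_sub} to bound the residual reduction from below, and then compare that reduction against the full residual norm. By Lemma~\ref{lem_conv_obl}, with $y=W^{T}r^{(i)}$ and $\sigma_1$ the largest singular value of $A$, we already have the estimate
\[
\|r^{(i)}\|^{2}-\|r^{(i+1)}\|^{2}\geq \frac{1}{\sigma_1^{2}}\,\|y\|^{2}.
\]
So it suffices to show that $\|y\|^{2}=\|W^{T}r^{(i)}\|^{2}\geq \sigma_n^{2}\,\|r^{(i)}\|^{2}$, for then substituting this into the displayed bound and rearranging yields exactly \eqref{eqn_obl-pro}. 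This reduces the whole theorem to a single lower bound on $\|W^{T}r^{(i)}\|$ in terms of $\|r^{(i)}\|$.

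To establish that lower bound, I would use the specific choice $v_j=e_{i_j}$. Since $W=AV$, the matrix $W$ is exactly the submatrix of $A$ consisting of the columns indexed by $i_1<i_2<\cdots<i_m$; equivalently, $W^{T}$ is the $m\times n$ submatrix of $A^{T}$ obtained by deleting $n-m$ rows of $A^{T}$. The key quantity is the smallest singular value of $W^{T}$ (equivalently of $W$), since for any vector $r$ one has $\|W^{T}r\|\geq \sigma_{\min}(W^{T})\,\|r\|$ only when $r$ lies in the appropriate subspace, so some care is needed here; more robustly, I would invoke the Courant--Fisher characterization to get $\|W^{T}r^{(i)}\|^{2}\geq \sigma_{m}(W^{T})^{2}\|r^{(i)}\|^{2}$ is \emph{not} valid in general for $m<n$ because $W^{T}r$ can vanish on the orthogonal complement of the row space. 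This is the subtle point of the argument.

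The main obstacle, therefore, is precisely this dimension mismatch: $W^{T}$ has a nontrivial kernel when $m<n$, so $\|W^{T}r^{(i)}\|$ can be small or zero even when $\|r^{(i)}\|$ is large, and a naive application of $\sigma_{\min}$ fails. The way to get the clean factor $\sigma_n^{2}/\sigma_1^{2}$ is to apply Theorem~\ref{sing_sub} to $A^{T}$ (or $A$) with $l=n-m$ deleted rows/columns and with index $k=m$: the theorem gives
\[
\sigma_{m}(W)\geq \sigma_{m+l}(A)=\sigma_{m+(n-m)}(A)=\sigma_{n}(A)=\sigma_n,
\]
so the smallest relevant singular value of $W$ is bounded below by $\sigma_n$. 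I would then argue that because $r^{(i+1)}=(I-WW^{\dagger})r^{(i)}$ is the orthogonal projection of $r^{(i)}$ off the column space of $W$, the reduction $\|r^{(i)}\|^2-\|r^{(i+1)}\|^2=\|WW^{\dagger}r^{(i)}\|^2$ equals the squared norm of the component of $r^{(i)}$ in the range of $W$, and on that component the action of $W^{\dagger}$ is controlled by $1/\sigma_m(W)$ from above and $1/\sigma_1(W)$ from below; tracking constants through Lemma~\ref{lem_conv_obl} converts the bound $\sigma_m(W)\geq\sigma_n$ and $\sigma_1(W)\leq\sigma_1$ into \eqref{eqn_obl-pro}.

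In summary, the proof proceeds in three steps: first invoke Lemma~\ref{lem_conv_obl} to lower-bound the residual reduction by $\|y\|^2/\sigma_1^2$; second, use the identification $W=AV=A[e_{i_1},\dots,e_{i_m}]$ as a column submatrix of $A$ together with Theorem~\ref{sing_sub} (with $l=n-m$, $k=m$) to obtain $\sigma_m(W)\geq\sigma_n$; and third, combine these through the projection identity $\|r^{(i)}\|^2-\|r^{(i+1)}\|^2=r^{(i)T}WW^{\dagger}r^{(i)}$ to isolate the factor $\sigma_n^2/\sigma_1^2$ and conclude \eqref{eqn_obl-pro}. I expect the delicate bookkeeping to lie in verifying that the relevant singular value appearing in the $\|y\|$ bound is genuinely $\sigma_m(W)$ (the smallest \emph{nonzero} one), rather than a spurious zero arising from the kernel of $W^T$.
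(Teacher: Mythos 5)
Your outline reproduces the paper's own argument step for step: Lemma~\ref{lem_conv_obl}, the identification of $W^{T}$ with the $m\times n$ submatrix $A[:,\{i_1,\ldots,i_m\}]^{T}$ of $A^{T}$ (the paper's $A_m$), and Theorem~\ref{sing_sub} with $k=m$, $l=n-m$ to conclude $\sigma_m(W)\geq\sigma_n$. But the issue you flag and then defer as ``delicate bookkeeping'' is not bookkeeping; it is a genuine hole, and it cannot be closed within this argument. Since $\|W^{T}r\|^{2}=r^{T}WW^{T}r$ and $WW^{T}$ is an $n\times n$ matrix of rank at most $m<n$, we have $\min_{\|r\|=1}\|W^{T}r\|=0$: no inequality of the form $\|W^{T}r^{(i)}\|^{2}\geq c\,\|r^{(i)}\|^{2}$ with $c>0$ can hold unconditionally, because $\sigma_m(W)$ controls $\|W^{T}r\|$ only for $r$ in the column space of $W$. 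Your proposed repair---splitting off the component of $r^{(i)}$ in the range of $W$ and controlling $W^{\dagger}$ there---produces bounds relative to $\|WW^{\dagger}r^{(i)}\|$, which is exactly the quantity being bounded, not relative to $\|r^{(i)}\|$; that projected component can simply vanish. In fact the theorem as stated is false for arbitrary fixed indices: take $A=I$, $m<n$, $v_j=e_j$ for $j=1,\ldots,m$, $b=e_n$, $x^{(0)}=0$, so that $r^{(0)}=e_n$. Then $r^{(1)}=(I-WW^{\dagger})e_n=e_n$, while the right-hand side of \eqref{eqn_obl-pro} equals $(1-\sigma_n^{2}/\sigma_1^{2})\|r^{(0)}\|^{2}=0$.

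For what it is worth, the paper's own proof commits precisely the error you warned against: from ``$A_m$ has full row rank'' it concludes $\|A_{m}r^{(i)}\|^{2}\geq\sigma_{\min}^{2}(A_{m})\|r^{(i)}\|^{2}$, treating the smallest singular value $\sigma_m(A_m)$ of the wide matrix $A_m$ as if it were $\min_{\|r\|=1}\|A_m r\|$; full row rank makes $A_mA_m^{T}$ invertible but does nothing for vectors in the kernel of $A_m$. So your diagnosis is sharper than the paper's proof, yet neither your third step nor the paper's closes the gap. A correct statement requires the index selection to depend on the residual. For instance, if $i_1,\ldots,i_m$ are chosen as the indices of the $m$ largest entries of $|A^{T}r^{(i)}|$, then
\begin{equation*}
\|y\|^{2}=\sum_{j=1}^{m}\bigl((A^{T}r^{(i)})_{i_j}\bigr)^{2}\geq\frac{m}{n}\,\|A^{T}r^{(i)}\|^{2}\geq\frac{m}{n}\,\sigma_{n}^{2}\,\|r^{(i)}\|^{2},
\end{equation*}
and Lemma~\ref{lem_conv_obl} then yields the weaker but valid contraction $\|r^{(i+1)}\|^{2}\leq\bigl(1-\tfrac{m}{n}\tfrac{\sigma_{n}^{2}}{\sigma_{1}^{2}}\bigr)\|r^{(i)}\|^{2}$.
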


\begin{proof} Let $\alpha=\{i_1,i_2,\ldots,i_m\}$, and $A_{m}=A[:,\alpha]^T$. Then $y=W^{T}r^{(i)}=A_{m}r^{(i)}$. Since $A_{m}$ has full row rank, as shown in \eqref{Wsing}, we can infer
\[\|y\|^{2}=\dfrac{\|A_{m}r^{(i)}\|^{2}}{\|r^{(i)}\|^{2}}\|r^{(i)}\|^{2}\geq \sigma_{min}^{2}(A_{m})\|r^{(i)}\|^{2}=\sigma_{m}^{2}(A_{m})\|r^{(i)}\|^{2}.\]
Taking $l=n-m$ and $k=m$ in Theorem~\ref{sing_sub}, we get
$\|y\|^{2}\geq \sigma_{n}^{2}\|r^{(i)}\|^{2}.$
So, from Lemma~\ref{lem_conv_obl} we conclude that
$$\|r^{(i)}\|^{2}-\|r^{(i+1)}\|^{2} \geq \dfrac{\sigma_{n}^{2}}{\sigma _{1}^{2}}\|r^{(i)}\|^{2},$$
Hence the conclusion follows.
\end{proof}

\begin{remark}\rm
	The quantity $\|r^{(i)}\|^{2}-\|r^{(i+1)}\|^{2}$ is greater when larger value of $m$ is chosen. This can be seen from ~\eqref{er} and following similar steps used in proving Theorem~\ref{error_red}.
\end{remark}

\begin{remark}\rm Under the assumption in Theorem~\ref{conv_obl}, equation \eqref{eqn_obl-pro} suggests that  the iteration process in (\ref{pro_step_mat1}) converges.
\end{remark}

\section{Numerical Experiments} 
In this section comparison of $m$D-OPM is established with various methods, like, CGNR, GMRES and Craig's method \cite{Craig} for any non-singular linear system. 

The algorithm of the $mD$-OPM, discussed in Section~\ref{sec3}, is as follows, which is same as proposed in~\cite{Sal09} by considering the symmetric system $A^TA=A^Tb$.

\begin{algorithm}
	\caption{\cite{Sal09} A particular implementation for arbitrary dimensional OPM} 
	1. Chose an initial guess $x^{(0)}\in \textbf{R}^{n}$ and decide on $m$, the number of times each component of  $x^{(0)} $ is improved in each iteration.\\
	2. Until Convergence, Do\\
	3. $x=x^{(0)}$.\\
	4. For $i=1,2,\dots ,n$, Do\\
	5.Select the indices $i_{1},i_{2},\dots ,i_{m}$ of $r$\\
	6. $E_{m}=\left[ e_{i_{1}},e_{i_{2}},\dots ,e_{i_{m}}\right] $\\
	7.Solve $(E_{m}^{T}A^{T}AE_{m})y_{m}=E_{m}^{T}A^{T}r$ for $y_{m}$\\
	8.$x=x+E_{m}y_{m}$\\
	9.$r=r-AE_{m}y_{m}$\\
	10.End Do\\
	11.$x^{(0)}=x$\\
	12.End Do.
	
\end{algorithm}

The experiments are done on a PC-Intel(R) Core(TM) i3-7100U CPU @ 2.40 GHz, 4 GB RAM. The computations are implemented in MATLAB 9.2.0.538062. The initial guess is $x^{(0)}=[0,0,\dots ,0]^{T}$ and the stopping criteria is $\|x^{(i+1)}-x^{(i)}\|<10^{-12}$.

While doing comparisions with $m$D-OPM, we consider different values of $m$ to get various results. The theory suggests that $m$D-OPM will have a good convergence for matrices whose singular values are closely spaced. Hence we chose the matrices accordingly.  

\begin{example}\rm\label{ex1}
	The first matrix is a symmetric $n \times n$ Hankel matrix with elements $A(i,j)=\frac{0.5}{n-i-j+1.5}$. The eigen values of $A$ cluster around $-\frac{\pi }{2}$ and $\frac{\pi }{2}$ and the condition number is of $O(1)$. The matrix is of size $100$. Comparision is done for different values of $m$ as well as with the CGNR, GMRES and Craig's method.
\end{example}
\begin{table}
	\caption{Results for \ref{ex1}}
	\begin{center}
		\begin{tabular}{|c||c|c|}
			\hline
			Iteration Process & No of Iterations & Residual   \\ \hline
			$6$D-OPM & 14 & $3.5755 \times 10^{-12}$\\ \hline
			$10$D-OPM & 8 & $4.6142 \times 10^{-12}$\\ \hline
			$50$D-OPM & 2 & $3.8 \times 10^{-15}$\\ \hline
			GMRES & 10 & $3.7\times 10^{-15}$\\ \hline
			CGNR & 9 & $5.3427\times 10^{-15}$\\ \hline
			Craig & 9 & $4.9704\times 10^{-15}$\\ \hline 
		\end{tabular}
	\end{center}
\end{table}
\newpage

\begin{example}\rm\label{ex2}
	We consider a square matrix of size $n$ with singular values $1+10^{-i}$, $i=1:n$. This is again a matrix with extremely good condition number. For such a well-conditioned matrix, $m$d-dspm works like a charm and is better than the CGNR. The matrix taken here is of size $400$. 
\end{example}

\begin{table}[h]
	\caption{Results for \ref{ex2}}
	\begin{center}
		\begin{tabular}{|c||c|c|}
			\hline
			Iteration Process & No of Iterations & Residual   \\ \hline
			$4$D-OPM & 1 & $2.1618 \times 10^{-15}$\\ \hline
			
			CGNR & 6 & $5.3328\times 10^{-15}$\\ \hline
			Craig & 6 & $5.4702\times 10^{-15}$\\ \hline
		\end{tabular}
	\end{center}
\end{table}

\section{Conclusion}
$m$D-OPM, presented in this paper, is a generalization of $m$D-SPM \cite{Sal09}, and can be applied to any non-singular system. Numerical experiments showed that this method is at par with other established methods. The way in which the search subspace is chosen put this method at a clear advantage over GMRES, because in GMRES, the orthogonalisation through Arnoldi process can lead to infeasible growth in storage requirements.

\section*{Reference}

\bibliography{mybibfile}

\end{document}